\newtheorem{theorem}{Theorem}[section]
\newtheorem{definition}[theorem]{Definition}
\newtheorem{lemma}[theorem]{Lemma}
\newtheorem{proposition}[theorem]{Proposition}
\newtheorem{remark}[theorem]{Remark}
\newcommand{\bP}{\mathbb{P}}
\title[Connectedness Bertini Theorem]
{Connectedness Bertini Theorem via numerical equivalence}
\author[Martinelli]{Diletta Martinelli}
\address{Department of Mathematics, Imperial College London, 180 Queen's
Gate, London SW7 2AZ, UK.}
\email{d.martinelli12@imperial.ac.uk}
\author[Naranjo]{Juan Carlos Naranjo}
\address{Departament d'\`Algebra i Geometria,
Facultat de Matemàtiques,
Universitat de Barcelona,
Gran Via, 585
08007 Barcelona, Spain.}
\email{ jcnaranjo@ub.edu}
\author[Pirola]{Gian Pietro Pirola}
\address{Dipartimento di Matematica,
Univerista' di Pavia,
Via Ferrata 1,
27100 Pavia, Italy}
\email{gianpietro.pirola@unipv.it}
\begin{document}

\begin{abstract} Let $X$ be an irreducible projective variety and $f$ a morphism $X \rightarrow \mathbb{P}^n$. We give a new proof of the fact that the preimage of any linear variety of dimension $k\ge n+1-\dim f(X)$ is connected. We show that the statement is a consequence of the Generalized Hodge Index Theorem using easy numerical arguments that hold in any characteristic. We also prove the connectedness Theorem of Fulton and Hansen as application of our main theorem.
\end{abstract}

\subjclass[2010]{14J70, 14J99}
\keywords{Bertini Theorem, connectedness, numerical equivalence, Hodge Index Theorem}

\maketitle

\section{Introduction}

In this note we focus our attention on Bertini Theorem. Eugenio Bertini (1846-1933) was a student of Luigi Cremona, one of the founders of the Italian school of Algebraic Geometry, who lived and worked in Pavia from 1880 to 1892. In his paper \cite{bertini} dated 1880 and published in 1882, he proved that for a nonsingular projective variety $X \subseteq \mathbb{P}^n$ the general hyperplane section is nonsingular, that is, there is a non empty Zariski open set $U \subseteq (\mathbb{P}^{n}) ^{\vee}$ such that for every $H \in U$,  the subvariety $H \cap X$ is regular at every point. Moreover, if the dimension of the variety is at least two, the general hyperplane section is connected.  For more details on the life and work of Bertini and on the history of this theorem we refer the reader to the paper of Kleiman \cite{Kleiman}. This famous theorem has been generalized in many directions, especially in the context of the theory of linear series. For further details, see the classic book of Jouanolou \cite{jouanolou}.

We consider here a slightly more general statement: let $X$ be an irreducible
projective variety defined over an algebraically closed field of any characteristic and let
\begin{center}
$f: X \longrightarrow \mathbb{P}^n$
\end{center}
be a morphism. Then the preimage of a linear variety $L \subseteq \mathbb{P}^n$ is connected, if $\dim L + \dim f(X) > n$.

Over the complex numbers, the statement is usually proved using the Generic Smoothness Theorem, that fails in positive characteristic (see for instance \cite[ Corollary 10.7]{hartshorne} for the statement of the Generic Smoothness and \cite[Theorem 3.3.1, Theorem 3.3.3]{lazarsfeld} for a proof of Bertini Theorem in this case).
A characteristic free proof of the irreducibility is \cite[Theorem 17, IX.6]{Weil} and \cite[Theorem 9]{seidenberg}.  Our aim is to give a new direct proof of the connectedness statement without using the Generic Smoothness Theorem. The main interest of our approach is the use of numerical connectedness to deduce topological connectedness on the preimages of linear varieties. In particular, we show that the connectedness result is a consequence of the Generalized Hodge Index Theorem. Hence, our proof works in any characteristic. 
 
More precisely, we prove the following.

\begin{theorem}\label{main}
Let $X$ be an irreducible projective variety and let 
\begin{center}
$f: X \longrightarrow \mathbb{P}^n$
\end{center}
be a morphism. Then for any linear subvariety $L \in \mathbb{G}(k,n)$ of dimension $k\ge n+1-\dim f(X)$, the preimage $f^{-1}(L)$ is connected.
\end{theorem}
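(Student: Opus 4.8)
The plan is to translate topological connectedness into a numerical statement and then contradict the signature coming from the Hodge Index Theorem. First I would apply Stein factorization, writing $f=h\circ g$ with $g\colon X\to X'$ having connected fibres and $h\colon X'\to\mathbb{P}^n$ finite. Since $g$ is surjective with connected fibres, $f^{-1}(L)=g^{-1}(h^{-1}(L))$ is connected if and only if $h^{-1}(L)$ is; hence I may assume from the start that $f$ is finite, that $\dim X=\dim f(X)=d$, and that $H:=f^{*}\mathcal{O}_{\mathbb{P}^n}(1)$ is \emph{ample} (the pullback of an ample class by a finite morphism). The preimage $f^{-1}(L)$ is then the intersection of $c:=n-k\le d-1$ members of the base-point-free linear system $f^{*}|\mathcal{O}(1)|\subseteq|H|$.

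The heart of the argument is the divisorial case $c=1$. Here $D:=f^{-1}(H_1)\in|H|$ is an effective divisor on the irreducible projective variety $X$ of dimension $d\ge 2$. If $D$ were disconnected I would write $D=A+B$ with $A,B$ effective, nonzero, and with disjoint supports; then the cycle $A\cdot B$ is supported on $A\cap B=\emptyset$ and so $A\cdot B\cdot H^{d-2}=0$, while from $A+B\equiv H$ and the ampleness of $H$,
\[
A^{2}\cdot H^{d-2}=H^{d-1}\cdot A>0,\qquad B^{2}\cdot H^{d-2}=H^{d-1}\cdot B>0.
\]
Thus $A$ and $B$ furnish two classes of strictly positive self-intersection that are orthogonal for the form $\xi\mapsto\xi^{2}\cdot H^{d-2}$. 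By the Generalized Hodge Index Theorem this form has signature $(1,\rho-1)$ (equivalently, I may restrict to a general surface section $S=H'_{1}\cap\cdots\cap H'_{d-2}$, irreducible by the characteristic-free Bertini irreducibility theorem, and invoke the classical Hodge Index Theorem on $S$), so it cannot carry two orthogonal vectors of positive square. This contradiction shows that $D$ is connected, in every characteristic and without any vanishing theorem.

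For general $c$ I would argue by induction on the codimension. Writing $L=M\cap H_c$ with $\dim M=k+1$, the set $W:=f^{-1}(M)$ is connected by the inductive hypothesis, and $f^{-1}(L)=W\cap f^{-1}(H_c)$ is the preimage of a hyperplane under the finite morphism $f|_{W}\colon W\to\mathbb{P}^n$, whose image $f(X)\cap M$ has dimension at least $d-c+1\ge 2$. Running the $c=1$ argument on $W$ then closes the induction, the base case $c=0$ being the irreducibility of $X$.

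The main obstacle is precisely this inductive step: the divisor computation and the index theorem are clean on an \emph{irreducible} variety, whereas $W=f^{-1}(M)$ is so far only known to be connected. I would remove this difficulty with characteristic-free Bertini irreducibility (Seidenberg, Weil) to choose the intermediate $M\supseteq L$ of codimension $c-1$ so that $f^{-1}(M)$ is irreducible; note that $f^{-1}(L)=f^{-1}(M)\cap f^{-1}(H_c)$ is unchanged for any such $M$ with $M\cap H_c=L$, so the disconnection to be contradicted is intrinsic to $L$. Krull's principal ideal theorem then forces every component of $f^{-1}(L)$ to have dimension $\ge d-c\ge 1$, which guarantees that the disconnection survives the passage to a general surface section. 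Establishing the existence of such an irreducible $M$ in all characteristics, for the fixed $L$, is the delicate point on which the reduction rests.
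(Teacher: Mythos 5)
Your Stein-factorization reduction and your codimension-one case are sound, and the latter is arguably cleaner than the paper's treatment: once $f$ is finite, $H=f^{*}\mathcal{O}_{\mathbb{P}^n}(1)$ is ample; in a disconnection $D=A+B$ the parts $A,B$ are automatically Cartier (their supports are disjoint, so locally one of them equals $D$ and the other $0$), they satisfy $A\cdot B\cdot H^{d-2}=0$ and $A^{2}\cdot H^{d-2}=A\cdot H^{d-1}>0$, $B^{2}\cdot H^{d-2}=B\cdot H^{d-1}>0$, and cutting with $d-2$ general members of $f^{*}\lvert\mathcal{O}_{\mathbb{P}^n}(1)\rvert$ (irreducible by characteristic-free Bertini irreducibility) and passing to an alteration of the resulting surface contradicts the Hodge Index Theorem. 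Note that the paper proceeds in the opposite order: it proves the statement for \emph{lines} by induction on $n$, with Lemmas \ref{lemma1} and \ref{lemma2} engineered to handle the possibly reducible divisor $D=\sum_i n_iD_i$ and nef classes supported on arbitrary unions of its components, and then raises $\dim L$ by projecting from a point of $L$ (Section \ref{higherdim}).

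The gap is exactly where you flag it, and as stated it is fatal: your induction needs a linear space $M\supseteq L$, of dimension one more, with $f^{-1}(M)$ \emph{irreducible}, and no Bertini irreducibility theorem provides this. The Seidenberg--Weil--Jouanolou theorems concern general members of the full system of hyperplanes, whereas the spaces $M$ containing a \emph{fixed} $L$ form an incomplete linear system with base locus $L$, whose pullback by $f$ can be composed with a pencil; then the general member is reducible. Concretely, in characteristic $\neq 2$ let $f\colon X\to\mathbb{P}^{3}$ be the normalization of $\mathbb{P}^{3}$ in the quadratic extension of $K(\mathbb{P}^{3})$ generated by a square root of $u=x_0x_1/x_2^{2}$, and let $L=\{x_0=x_1=0\}$; here $X$ is irreducible, $f$ is finite of degree $2$, and $k=1\geq n+1-\dim f(X)$, so your inductive step must treat this $L$. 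But for every plane $M_t=\{x_1=tx_0\}$ with $t\neq 0,\infty$ the restriction $u\vert_{M_t}=t(x_0/x_2)^{2}$ is a square in $K(M_t)$, so $f^{-1}(M_t)$ splits into two components: the \emph{general} plane through $L$ has reducible preimage, and only the two branch planes $\{x_0=0\}$, $\{x_1=0\}$ have irreducible preimage --- special members your argument gives no means of producing in general. Nor can you run the codimension-one argument on a merely connected $W=f^{-1}(M)$: two irreducible components of $W$ may meet only in a finite set whose image avoids the hyperplane $H_c$, so connectedness of $W$ does not propagate to $W\cap f^{-1}(H_c)$. This is precisely the difficulty the paper's proof is designed to avoid --- its induction never requires irreducibility of any intermediate preimage.
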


The steps of our proof are the following: first, in  Section \ref{dominantgen}, we prove that we can assume that $f$ is dominant and the linear variety $L$ is general in the corresponding Grassmannian. Then in Section \ref{line} we prove the statement when $k=1$: first, we assume that $n=2$ and we use the fact that big and nef divisors are $1$-connected, see Proposition \ref{1conn}, and then we generalize to any $n$. Finally, we conclude with a proof by induction on the dimension of $L$ (Section \ref{higherdim}). 

In Section \ref{fultonhansen}, following an idea of Deligne, we deduce the connectedness Theorem of Fulton and Hansen \cite{fulthansen}. This theorem is a striking generalization of Bertini Theorem with many interesting geometric applications (see \cite{fultlaz}). For instance, F.L. Zak, in \cite{zak}, has used the connectedness theorem to establish a famous result on tangencies to a smooth subvariety $X \subseteq \mathbb{P}^m$ of dimension $n$, from which he was able to deduce that if $3n > 2(m - 1)$ then $X$ is linearly normal, as predicted by the Hartshorne's conjecture. Morover, over the complex number, it is possible to obtain information about the fundamental groups by applying these connectivity results to a covering space of the variety. In this context, Deligne, see \cite{Deligne}, generalized the connectedness theorem to a statement about the fundamental group $\pi_1$ and $\pi_0$, and then his work has been extended to higher homotopy groups, see \cite[Section 9]{fultlaz} and \cite{sommese}. It would be interesting to see whether using our methods some of these problems might be generalized to the algebraic setting. Many nice questions remain open. 

\vspace{5mm}

{\bf Acknowledgments:} This article has been originally developed during the 
undergraduate thesis of the first author at the University of Barcelona and at the University of Pavia in a joint work with the second and the third author. We would like to thank Paolo Cascini, Andrew Sommese and Roland Abuaf for their useful comments and suggestions. The first author is supported by a Roth scholarship. The second author has been partially supported by the Proyecto de Investigaci\'on MTM2012-38122-C03-02; The third author has been partially supported by Gnsaga and by MIUR PRIN 2012: \textup{Moduli, strutture geometriche e loro applicazioni}.

\section{Preliminaries}
In this paper all the varieties are considered with the Zariski topology. We recall some definitions.

\begin{definition}
Let $X$ be a complete variety and $D$ a divisor on $X$, we say that $D$ is nef (numerically effective) if $D \cdot C \geq 0$, for all irreducible curves $C \subseteq X$. 
\end{definition} 

\begin{definition}
A line bundle $L$ on an irreducible projective variety $X$ is big if it has maximal Kodaira dimension: $k(X, L) = \dim X$.
\end{definition}

\begin{definition}
A line bundle $L$ on an irreducible projective variety $X$ is semiample if there exists an integer $r$ such that $L^{\otimes r}$ is globally generated.
\end{definition}

We will use the following characterization of bigness for nef divisors, see \cite[I, Theorem 2.2.16]{lazarsfeld}.
\begin{theorem} \label{bignessfornef}
Let $D$ be a nef divisor on an irreducible projective variety $X$ of dimension $n$. Then $D$ is big if and only if its top self-intersection is stricly positive: $D^n > 0$.
\end{theorem}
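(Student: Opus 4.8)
The plan is to reduce the statement to the asymptotic behaviour of the dimensions $h^0(X,\mathcal{O}_X(mD))$ as $m\to\infty$. By definition $D$ is big exactly when the Iitaka dimension $\kappa(X,D)$ equals $n=\dim X$, which is the same as requiring $h^0(X,\mathcal{O}_X(mD))$ to grow like $m^n$. Hence it suffices to prove that, for a nef divisor $D$,
\[
h^0(X,\mathcal{O}_X(mD))=\frac{D^n}{n!}\,m^n+O(m^{n-1}).
\]
The first ingredient is the Snapper--Kleiman asymptotic Riemann--Roch theorem, valid over any field: $\chi(X,\mathcal{O}_X(mD))$ is a numerical polynomial in $m$ of degree at most $n$ whose leading term is exactly $\frac{D^n}{n!}\,m^n$, so that
\[
\chi(X,\mathcal{O}_X(mD))=\frac{D^n}{n!}\,m^n+O(m^{n-1})
\]
holds unconditionally.

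The core of the argument, and the step I expect to be the main obstacle, is the cohomological estimate
\[
\sum_{i\ge 1}h^i(X,\mathcal{O}_X(mD))=O(m^{n-1})
\]
for $D$ nef. I would establish it by induction on $n$. Choosing a very ample divisor $A$ and a general member $Y\in|A|$, the restriction $D|_Y$ stays nef on the $(n-1)$-dimensional variety $Y$, so the inductive hypothesis bounds its higher cohomology by $O(m^{n-2})$. One then compares $\mathcal{O}_X(mD)$ with the ample class $\mathcal{O}_X(m(D+A))$ through the structure sequences attached to $Y$, using Serre vanishing to kill the higher cohomology of the ample twist. The delicate point is to organise this comparison so that the successive restrictions to $Y$ accumulate to a contribution of order only $m^{n-1}$, the term $h^1$ requiring particular care.

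Granting the estimate, the two displays combine to give
\[
h^0(X,\mathcal{O}_X(mD))=\chi(X,\mathcal{O}_X(mD))+O(m^{n-1})=\frac{D^n}{n!}\,m^n+O(m^{n-1}),
\]
and both implications follow at once. Since $D$ is nef its top self-intersection satisfies $D^n\ge 0$ automatically. If $D^n>0$ then $h^0$ grows like $m^n$, so $\kappa(X,D)=n$ and $D$ is big. Conversely, if $D^n=0$ then $h^0=O(m^{n-1})$, forcing $\kappa(X,D)\le n-1$, so a big divisor must have $D^n>0$. As asymptotic Riemann--Roch and the vanishing estimate both hold in arbitrary characteristic, the whole argument is characteristic-free.
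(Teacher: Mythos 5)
The paper itself offers no proof of this statement: it is quoted as a known result with a citation to Lazarsfeld [I, Theorem 2.2.16], so the only meaningful comparison is with the argument in that reference --- which is precisely the one you are reconstructing (Snapper--Kleiman asymptotic Riemann--Roch plus the bound $h^i(X,\mathcal{O}_X(mD))=O(m^{n-i})$ for nef $D$, Lazarsfeld's Theorem 1.4.40). Your skeleton is therefore the right one, and the first and last steps (asymptotic Riemann--Roch, and deducing both implications from the expansion of $h^0$) are correct and characteristic-free as you say.

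The gap is exactly where you flagged it, and your proposed mechanism does not close it. Comparing $\mathcal{O}_X(mD)$ with $\mathcal{O}_X(m(D+A))$ forces you to telescope through the $m$ intermediate sheaves $\mathcal{O}_X(mD+kA)$, $k=0,\dots,m$, and each step of the telescope contributes a term $h^{i-1}\big(Y,(mD+(k+1)A)|_Y\big)$. For $i=1$ these are $h^0$'s on the $(n-1)$-dimensional variety $Y$, each of size roughly $C(m+k)^{n-1}=O(m^{n-1})$, and there are $m$ of them, so this route only yields $h^1(X,\mathcal{O}_X(mD))=O(m^{n})$, which is useless; Serre vanishing cannot be invoked with a bounded twist because it is not uniform in the nef summand $mD$. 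The standard repair --- and what the cited source actually does --- is Fujita's vanishing theorem, valid in any characteristic: for a fixed sufficiently ample $A$ one has $H^i(X,\mathcal{O}_X(A+N))=0$ for all $i>0$ and \emph{all} nef divisors $N$ simultaneously. Then the single restriction sequence
\begin{equation*}
0 \longrightarrow \mathcal{O}_X(mD) \longrightarrow \mathcal{O}_X(mD+A) \longrightarrow \mathcal{O}_Y\big((mD+A)|_Y\big) \longrightarrow 0
\end{equation*}
gives $h^i(X,\mathcal{O}_X(mD))\le h^{i-1}\big(Y,(mD+A)|_Y\big)$ for every $i\ge 1$ and every $m$ at once: the case $i=1$ is bounded by an $h^0$ on $Y$, hence $O(m^{n-1})$, and the cases $i\ge 2$ follow by induction --- provided you also strengthen the inductive statement to allow a fixed Cartier twist, i.e.\ $h^j(Y,mD'+C)=O(m^{(n-1)-j})$, since what gets restricted is $mD+A$ and not a multiple of a fixed divisor. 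Without Fujita vanishing (or an equivalent vanishing uniform over the nef cone), the induction as you describe it does not produce the $O(m^{n-1})$ estimate, and the whole proof stalls at $h^1$.
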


A central tool in the proof of numerical connectedness on surfaces is the following version of the Hodge Index Theorem, which is more general than the usual one for surfaces and can be easily deduced from the standard version (see \cite[V, Theorem 1.9] {hartshorne}):

\begin{theorem}\label{HIT}(Hodge Index Theorem) Let $S$ be a smooth projective surface and 
let $H$ be a divisor with $H^2 > 0$. Let $D$ be a divisor such that $D\cdot H = 0$. Then either $D^2 < 0$ or $D$ is numerically trivial.
\end{theorem}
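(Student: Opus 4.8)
The plan is to deduce this from the classical Hodge Index Theorem for an \emph{ample} divisor (\cite[V, Theorem 1.9]{hartshorne}, already cited), by reinterpreting that statement as information about the signature of the intersection pairing and then transporting it from an ample class to the given class $H$.

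First I would work on the finite-dimensional real vector space $N := (\operatorname{Num} S)\otimes \mathbb{R}$, on which the intersection product defines a symmetric bilinear form. By the very definition of numerical equivalence this form is \emph{nondegenerate}: its radical consists exactly of the numerically trivial classes, which are zero in $N$. The classical statement, applied to a fixed ample divisor $A$, says precisely that the orthogonal complement $A^{\perp}$ carries a negative definite form, while $\mathbb{R}A$ is positive (as $A^2>0$); combined with nondegeneracy this shows that the pairing on $N$ has signature $(1,\rho-1)$, where $\rho = \dim N$.

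Next I would invoke the hypothesis $H^2>0$. Since $H$ is not isotropic, the line $\mathbb{R}H$ is positive definite and one has an orthogonal decomposition $N = \mathbb{R}H \perp W$ with $W = H^{\perp}$ of dimension $\rho-1$. By additivity of the signature under orthogonal direct sums (Sylvester), the contribution $(1,0)$ of $\mathbb{R}H$ forces the form on $W$ to have signature $(0,\rho-1)$; that is, the intersection form is \emph{negative definite} on $W$. Now any divisor $D$ with $D\cdot H = 0$ has class lying in $W$: if $D$ is not numerically trivial its class is a nonzero vector of $W$, whence $D^2<0$ by negative definiteness; otherwise $D$ is numerically trivial. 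This is exactly the asserted dichotomy.

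I expect the only delicate point to be the bookkeeping that turns the cited ample-divisor statement into the signature assertion, together with the verification that the orthogonal splitting is honest (which needs nothing beyond $H^2\neq 0$ and nondegeneracy). If one prefers to avoid the language of signatures, the same conclusion follows by a direct contradiction: were $D^2\ge 0$ with $D\cdot H = 0$ and $D\not\equiv 0$, one would produce a two-dimensional subspace of $N$ on which the form is positive definite, spanned by $H$ and, in the borderline case $D^2=0$, by a small perturbation $D+sE$ with $E\in H^{\perp}$ and $D\cdot E\neq 0$ (such an $E$ existing by nondegeneracy). This contradicts the fact that an ample class already accounts for the single positive direction.
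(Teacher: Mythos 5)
Your proof is correct, but note that the paper itself does not prove this statement at all: it quotes it as a known strengthening of the classical Hodge Index Theorem, remarking only that it ``can be easily deduced from the standard version'' of \cite[V, Theorem 1.9]{hartshorne}. Your argument supplies exactly that missing deduction, in the standard way: the classical theorem for an ample class $A$ makes $A^{\perp}$ negative, hence the intersection form on $N=\operatorname{Num}(S)\otimes\mathbb{R}$ has signature $(1,\rho-1)$, and Sylvester's law then transports the single positive direction from $A$ to any class $H$ with $H^{2}>0$, forcing $H^{\perp}$ to be negative definite and yielding the dichotomy. One step deserves to be written out explicitly: Hartshorne's statement concerns integral divisor classes, so it literally gives $D^2<0$ only at the nonzero \emph{rational} points of $A^{\perp}$, and a quadratic form negative on all nonzero rational vectors of a subspace need not be negative definite on its real span (e.g.\ $-(x-\sqrt{2}y)^{2}$). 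The fix is short: by density and continuity the form is negative \emph{semidefinite} on $A^{\perp}$; since $A^{2}>0$ and the full form is nondegenerate, its restriction to $A^{\perp}$ is nondegenerate; and a nondegenerate negative semidefinite form is negative definite (equivalently, the radical is a rational subspace, so were it nonzero it would contain a nonzero rational class of square zero, contradicting the classical theorem). Your phrase ``combined with nondegeneracy'' gestures at precisely this, so the idea is present, but this is the one point where the bookkeeping is genuinely needed rather than routine. The remainder --- the orthogonal splitting $N=\mathbb{R}H\perp H^{\perp}$, additivity of signatures, and the conclusion for $D$ --- is correct as written, as is your alternative perturbation argument in the borderline case $D^{2}=0$.
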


Generalizations of this classical result have arisen in many directions. We will use the following inequality, see \cite[Theorem 1.6.1, Formula (1.24)]{lazarsfeld}:
\begin{theorem}[Generalized inequality of Hodge type] \label{genhodge}
 Let $X$ be an irreducible complete variety of dimension $n$, and let 
\begin{equation*}
\beta_1, \dots, \beta_{n-1}, h 
\end{equation*}
be numerical classes of nef divisors. Then 
\begin{equation*}
\big(\beta_1 \cdot \dots \cdot \beta_{n-1}\cdot h \big)^{n-1} \geq \big( (\beta_1)^{n-1}\cdot h \big) \cdots  \big( (\beta_{n-1})^{n-1}\cdot h \big).
\end{equation*}
\end{theorem}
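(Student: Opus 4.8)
The plan is to reduce the statement to the two-variable Hodge-type (Khovanskii--Teissier) inequality
\begin{equation*}
\big(\delta_1 \cdot \delta_2 \cdot \gamma_1 \cdots \gamma_{n-2}\big)^2 \geq \big(\delta_1^2 \cdot \gamma_1 \cdots \gamma_{n-2}\big)\big(\delta_2^2 \cdot \gamma_1 \cdots \gamma_{n-2}\big),
\end{equation*}
valid for nef classes $\delta_1,\delta_2,\gamma_1,\dots,\gamma_{n-2}$, and then to bootstrap from it to the general product inequality. Before anything else I would reduce to ample classes: an intersection number is a polynomial, hence continuous, function of the numerical classes, and the nef cone is the closure of the ample cone, so both sides of every inequality above are continuous and it suffices to treat ample classes; by homogeneity (both sides scale identically under $\beta_i \mapsto \lambda\beta_i$) I may then clear denominators and pass to integral, and finally to very ample, classes whenever a cutting argument requires it.

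The heart of the matter is the two-variable inequality, which I would prove by induction on $n=\dim X$. The base case $n=2$ is a direct consequence of the Hodge Index Theorem (Theorem \ref{HIT}): for nef $\delta_1,\delta_2$ on a smooth projective surface, if $\delta_1^2=0$ the inequality is trivial, while if $\delta_1^2>0$ one applies Theorem \ref{HIT} to the class $D=\delta_2-\frac{\delta_1\cdot\delta_2}{\delta_1^2}\,\delta_1$, which satisfies $D\cdot\delta_1=0$ and hence $D^2\le 0$; expanding $D^2\le 0$ gives exactly $(\delta_1\cdot\delta_2)^2\ge \delta_1^2\,\delta_2^2$. For the inductive step I would assume (after the reduction above) that $\gamma_{n-2}$ is very ample, choose a general member $Y\in|\gamma_{n-2}|$, which is an irreducible complete variety of dimension $n-1$ by the characteristic-free Bertini irreducibility theorem (\cite{Weil,seidenberg}), and restrict. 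Since restrictions of nef classes are nef and, by the projection formula, intersection numbers against $\gamma_{n-2}$ are computed on $Y$, the inequality on $X$ becomes the corresponding inequality on $Y$, to which the inductive hypothesis applies.

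A point that deserves care, and where the characteristic-free nature of the argument is really tested, is that a general very ample section in positive characteristic need not be smooth, so at the bottom of the induction $Y$ is only a possibly singular surface. I would handle this by passing to a resolution $\nu\colon\widetilde{Y}\to Y$ (surfaces admit resolutions in every characteristic) and pulling back the nef classes: they remain nef and, $\nu$ being birational, all intersection numbers are preserved, so Theorem \ref{HIT} may be applied on the smooth surface $\widetilde{Y}$.

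Finally, to deduce the stated inequality I would introduce the symmetric $(n-1)$-linear form $M(x_1,\dots,x_{n-1})=x_1\cdots x_{n-1}\cdot h$ on the nef cone, which is nonnegative on nef inputs and, by the two-variable inequality (folding $h$ and all but two of the $\beta$-slots into the $\gamma$'s), satisfies
\begin{equation*}
M(x,y,z_3,\dots,z_{n-1})^2 \geq M(x,x,z_3,\dots,z_{n-1})\,M(y,y,z_3,\dots,z_{n-1})
\end{equation*}
for all nef arguments. The passage from this to $M(\beta_1,\dots,\beta_{n-1})^{n-1}\ge \prod_{i}M(\beta_i,\dots,\beta_i)$ is then the standard derivation of the generalized arithmetic--geometric mean inequality from an Alexandrov--Fenchel-type inequality, carried out by induction on the number of factors; the observation making the induction go through is that fixing any nef slot of $M$ produces a form of one lower arity that still obeys the same two-variable inequality. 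I expect this combinatorial bootstrap, together with the characteristic-free surface reduction above, to be the main obstacle, the remaining steps being formal consequences of continuity and the projection formula.
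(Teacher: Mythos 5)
You should first know that the paper contains no proof of Theorem \ref{genhodge} to compare against: the result is quoted as a known tool, with a pointer to \cite[Theorem 1.6.1, Formula (1.24)]{lazarsfeld}. Your argument is, in substance, a reconstruction of that cited proof, i.e.\ the classical Khovanskii--Teissier argument: reduce by continuity and homogeneity to very ample integral classes; prove the two-slot inequality by induction on dimension, cutting with a general member of a very ample system (irreducible by characteristic-free Bertini) until you land on the Hodge Index Theorem (Theorem \ref{HIT}) for surfaces; then pass to the full product inequality by the formal Alexandrov-type induction. In the context of this paper your write-up adds something the bare citation does not: \cite{lazarsfeld} works over the complex numbers, while the paper needs the inequality in arbitrary characteristic, and you make the characteristic-free substitutes explicit --- Bertini irreducibility in the form of \cite{Weil}, \cite{seidenberg} rather than generic smoothness, and resolution of surface singularities (a regular alteration as in Definition \ref{alteration} would serve equally well, since the degree of the alteration cancels from both sides of the quadratic inequality). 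Two caveats. First, the theorem is stated for complete, not necessarily projective, varieties, and your opening reduction presupposes that ample classes exist; precede it with Chow's lemma, pulling back the nef classes along a projective birational modification, which preserves nefness and all intersection numbers. Second, the concluding bootstrap is indeed classical (it is Alexandrov's deduction of the generalized arithmetic--geometric mean inequality from the Alexandrov--Fenchel inequality, the degenerate cases where a factor on the right vanishes being trivial since the left side is nonnegative), but it is the least detailed step of your sketch; note that the presence of the extra class $h$ permits the reorganization used in \cite{lazarsfeld}: once $h$ is very ample, a single cut by a general member $Y\in|h|$ converts the stated inequality on $X$ into the diagonal product inequality $\big(\delta_1\cdots\delta_{n-1}\big)^{n-1}\geq \prod_i \delta_i^{\,n-1}$ on the $(n-1)$-dimensional variety $Y$, so the dimension induction can be run on that statement, with the two-slot log-concavity absorbed into the inductive step rather than invoked as a separate combinatorial lemma.
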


One of the main problems in positive characteristic is the lack of resolution of singularities for varieties of dimension greater than three. Instead of the resolution of singularities, we will use several times the existence of alterations.
\begin{definition} \label{alteration} (See \cite{DeJong})
Let $X$ be a variety over a field $k$ algebraically closed. An alteration of X is a proper dominant morphism $X' \longrightarrow X$ of varieties over $k$, with $\dim X' = \dim X$. An alteration is regular if $X'$ is smooth.
\end{definition}
Thanks to \cite[Theorem 3.1]{DeJong}, there always exists a regular alteration.

\begin{remark}\label{irr-smooth}
In Theorem \ref{main} we can assume $X$ to be nonsingular. Indeed, assume by contradiction that there exists a variety $X$, a morphism $f: X \longrightarrow \mathbb {P}^n$ and a linear subvariety $L$ such that $f^{-1}(L)$ is disconnected. Consider a regular alteration $a: \tilde X \longrightarrow X$. Then the components of $a^{-1}f^{-1}(L)$ are disconnected. 
\end{remark}

\section{The main Theorem} \label{mainsection}
This section is devoted to the proof of Theorem \ref{main}. Observe that for $n=1$ there is nothing to prove because the only possible choice is $L = \mathbb{P}^1$, so we can assume $n\ge 2$. Thanks to Remark \ref{irr-smooth}, we can assume $X$ to be nonsingular.

\subsection{Reduction to $f$ dominant and $L$ general} \label{dominantgen}
It is enough to prove the theorem for a general linear variety. This is a consequence of an argument of Jouanolou (see the proof of \cite[Theorem 3.3.3]{lazarsfeld}). So from now on we will assume the generality of $L$.

Let $f: X \longrightarrow \mathbb{P}^n$ be a morphism and  $L \subseteq \mathbb {P}^{n}$ a linear variety, we assume that $k=\dim L\ge n+1-\dim f(X)$.
If $f$ is not dominant and $L$ is general, $L$ is not contained in $f(X)$. Then there exist a point $p \in L\setminus f(X)$. We project from the point $p$ to an hyperplane $H=\mathbb {P}^{n-1}$ and let $f':X\longrightarrow \mathbb{P}^{n-1}$ be the composition of $f$ with this projection. The intersection  $L\cap H$ is a linear subvariety  $L'\subset H$ such that
\begin{equation*}
\dim L' = k-1 \ge \dim H +1 - \dim f'(X)
\end{equation*} 
(observe that $\dim f'(X)=\dim f(X)$). If the theorem is true for the map $f'$, we have that $f'^{-1}(L')=f^{-1}(L)$ is connected. Therefore, it is enough to prove the theorem for $f'$. If $f'$ is not dominant, we perform successive projections until we reach a dominant map. So we can assume for the rest of the proof that $f$ is dominant.

\subsection{Connectedness of the preimage of a line by a generically finite map} \label{line}

The aim of this subsection is to prove the following particular case of Theorem \ref{main}.
\begin{proposition} \label{line:conn}
Let $X$ be a projective variety of dimension $n$ defined over an algebraically closed field and let $f: X \longrightarrow \mathbb P^n$ be a generically finite map. Let $L\subseteq \bP^n$ be a general line. Then $f^{-1}(L)$ is connected.
\end{proposition}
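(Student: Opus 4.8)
The plan is to reduce to the case $n=2$ and there to deduce topological connectedness from numerical $1$-connectedness. By Remark \ref{irr-smooth} we may assume $X$ smooth, and since $f$ is generically finite and dominant the class $H:=f^{*}\mathcal{O}(1)$ is nef, being the pullback of an ample class by a morphism. Moreover $H^{n}=\deg(f)\cdot\big(\mathcal{O}(1)^{n}\big)=\deg(f)>0$, so $H$ is big by Theorem \ref{bignessfornef}. For a general line $L$ the scheme $f^{-1}(L)$ is cut out by $n-1$ pullbacks of general hyperplanes, each an effective divisor numerically equivalent to $H$; this is the object whose connectedness I want to establish.

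First I would treat $n=2$. Here $D:=f^{-1}(L)$ is an effective divisor on the smooth surface $X$ with $D\equiv H$ big and nef. If $D$ were disconnected we could write $D=A+B$ with $A,B$ effective, nonzero and of disjoint support, so that $A\cdot B=0$. Consider $\Delta:=(H\cdot B)\,A-(H\cdot A)\,B$, which satisfies $\Delta\cdot H=0$. Using $A\cdot B=0$ together with $H\equiv A+B$ one finds $\Delta^{2}=(H\cdot A)(H\cdot B)\,H^{2}\ge 0$, so by the Hodge Index Theorem (Theorem \ref{HIT}) $\Delta$ is numerically trivial. A short argument then forces one of $A,B$ to be numerically trivial, which is impossible for a nonzero effective divisor as it must meet an ample class positively. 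This contradiction is exactly the $1$-connectedness of big and nef divisors, Proposition \ref{1conn}, and it shows that $D$ is connected.

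For general $n$ I would cut down to a surface. Choose a general $2$-plane $\Pi\cong\mathbb{P}^{2}$ so that $L\subset\Pi$ is a general line of $\Pi$, and set $S:=f^{-1}(\Pi)$, cut out in $X$ by $n-2$ pullbacks of general hyperplanes. By the characteristic-free irreducibility theorem applied to general members of the base-point-free system $f^{*}|\mathcal{O}(1)|$ (see \cite{Weil}, \cite{seidenberg}), $S$ is an irreducible projective surface and $f|_{S}\colon S\to\Pi$ is generically finite, with $f^{-1}(L)=(f|_{S})^{-1}(L)$. Since the smoothness form of Bertini may fail in positive characteristic, $S$ need not be smooth; to remedy this I would take a regular alteration $a\colon\widetilde S\to S$ (Definition \ref{alteration}). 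Then $\widetilde f:=f|_{S}\circ a\colon\widetilde S\to\mathbb{P}^{2}$ is generically finite, $\widetilde f^{*}\mathcal{O}(1)$ is big and nef, and by the case $n=2$ the divisor $\widetilde f^{-1}(L)=a^{-1}\big(f^{-1}(L)\big)$ is connected. As $a$ is proper and dominant, hence surjective, $f^{-1}(L)=a\big(\widetilde f^{-1}(L)\big)$ is the continuous image of a connected set, and therefore connected.

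I expect the main obstacle to be precisely the failure of the smoothness form of Bertini's theorem in positive characteristic: the complete-intersection surface $S$ is genuinely singular in general, so the Hodge Index Theorem cannot be applied to it directly. The decisive device is to replace $S$ by a regular alteration and to transport connectedness back down along the proper surjection $a$. Keeping the pulled-back polarization big and nef at each stage, and securing the irreducibility of $S$ by a characteristic-free argument rather than by generic smoothness, are the points that require care.
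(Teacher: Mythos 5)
Your $n=2$ argument is correct and is essentially the paper's own: the computation with $\Delta=(H\cdot B)A-(H\cdot A)B$ is a repackaging of Proposition \ref{1conn} (big and nef divisors on a smooth surface are $1$-connected, via the Hodge Index Theorem \ref{HIT}), and the deduction of connectedness of the preimage of a line from $1$-connectedness is exactly the paper's Subsection \ref{num:conn}. The divergence --- and the real problem --- is in your reduction from $n$ to $2$. You dispose of the central difficulty by citing the characteristic-free Bertini irreducibility theorem (Weil, Seidenberg; in modern form Zariski--Matsusaka or Jouanolou) to conclude that $S=f^{-1}(\Pi)$ is irreducible. But that theorem is strictly stronger than the proposition you are proving: applied one more time, to the generically finite dominant map $f|_{S}\colon S\to\Pi\cong\mathbb{P}^{2}$, whose image has dimension $2$, it already says that $(f|_{S})^{-1}(L)=f^{-1}(L)$ is irreducible for general $L$, hence connected. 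So in your write-up the citation carries all the weight, the alteration and the entire Hodge-index portion become redundant, and what remains is in substance the classical derivation ``irreducibility $\Rightarrow$ connectedness'' which the paper explicitly sets aside in its introduction (Weil and Seidenberg are cited there precisely as the known alternative route that the authors want to avoid). There are also smaller unproved points hidden in the citation: to cut down to $\Pi$ you must iterate the irreducibility theorem $n-2$ times, and at each step you need that no component of the intermediate preimage is contracted (so that the restricted map is again dominant with image of dimension at least $2$); the paper proves this non-contraction statement by the moving argument for the components $D_i$.

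By contrast, the paper's proof is built never to assume any preimage is irreducible. It writes $D=f^{*}(H)=\sum_i n_iD_i$ with arbitrarily many components, and the actual content is Lemma \ref{lemma1} and Lemma \ref{lemma2}: for any decomposition $D=A+B$ into effective nef pieces one has $D^{n-2}\cdot A\cdot B>0$, proved with the generalized inequality of Hodge type (Theorem \ref{genhodge}); the induction on $n$ then produces a point of $f([A]\cap [B])\cap L$ gluing the connected curves $C_i=f_i^{-1}(L)$ to each other. If you want your reduction to stand on its own, you must either prove the irreducibility of $S$ (which is the hard classical theorem, not a remark to be quoted in passing) or replace it by an argument, like the paper's, that tolerates reducible preimages. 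As written, your proposal contains no false step, but it establishes the statement only by assuming a stronger one, which in this context is a gap rather than a proof.
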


We start with the simplest case.
\subsubsection{Case $n=2$}\label{num:conn} Since the map $f$ is generically finite, this implies that $X$ is a surface.
We need some numerical connectedness considerations:

\begin{definition}
Let $X$ be a projective surface. A divisor $H$ is $1$-connected if for any non-trivial effective divisors $A$ and $B$ such that $A$ and $B$ do not have common components and $H = A + B$, then $A\cdot B \geq 1$.
\end{definition} 

The proof  of the following lemma is contained in Lemma 3.11, \cite{kollar}.

\begin{proposition}\label{1conn}
Let  $H$ be a big and nef divisor on a smooth projective surface $X$. Then $H$ is $1$-connected.
\end{proposition}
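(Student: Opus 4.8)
The plan is to argue by contradiction, using the fact that bigness and nefness force a positive top self-intersection together with the Hodge Index Theorem on surfaces. Write $H = A + B$ with $A,B$ effective, nontrivial and without common components, and suppose for contradiction that $A\cdot B \le 0$. By Theorem \ref{bignessfornef}, since $H$ is big and nef we have $H^2 > 0$. Because $H$ is nef and $A,B$ are effective (so they are nonnegative combinations of irreducible curves), both intersection numbers $H\cdot A$ and $H\cdot B$ are nonnegative; and since $H\cdot A + H\cdot B = H^2 > 0$, at least one of them is strictly positive. Note that only the effectivity of $A$ and $B$ enters the numerical estimates below, the hypothesis of having no common components being part of the definition of $1$-connectedness.

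The first step is to record two consequences of the assumption $A\cdot B\le 0$. From $H=A+B$ we get $A^2 = H\cdot A - A\cdot B \ge H\cdot A$ and, symmetrically, $B^2 \ge H\cdot B$; thus both $A$ and $B$ already have nonnegative self-intersection. The second, and key, step is to extract from Theorem \ref{HIT} the Hodge index inequality. Given that $H^2 > 0$ and any divisor $E$, I would consider the auxiliary class $D = (H^2)\,E - (H\cdot E)\,H$, which satisfies $D\cdot H = 0$; Theorem \ref{HIT} then gives $D^2 \le 0$, and expanding $D^2 = H^2\bigl(H^2\,E^2 - (H\cdot E)^2\bigr)$ together with $H^2>0$ yields
\[
(H\cdot E)^2 \ge H^2\cdot E^2 .
\]

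Applying this inequality to $E=A$ and combining it with $A^2 \ge H\cdot A$, whenever $H\cdot A > 0$ I can divide through by $H\cdot A$ to obtain $H\cdot A \ge H^2$; likewise $H\cdot B \ge H^2$ whenever $H\cdot B > 0$. If both $H\cdot A$ and $H\cdot B$ are strictly positive, then $H^2 = H\cdot A + H\cdot B \ge 2H^2$, forcing $H^2 \le 0$, a contradiction. It then only remains to treat the boundary case in which one of the two numbers vanishes, say $H\cdot B = 0$ (so that $H\cdot A = H^2 > 0$). Here $B^2 \ge H\cdot B = 0$, while $H\cdot B = 0$ and $H^2 > 0$ allow me to apply Theorem \ref{HIT} directly and conclude that $B$ is numerically trivial.

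This last case is where I expect the argument to need the most care: to reach the contradiction I must invoke the fact that a nonzero effective divisor on a projective surface cannot be numerically trivial, which I would justify by intersecting $B$ with an ample class and obtaining a strictly positive number. Since $B$ is nontrivial and effective, this contradicts its numerical triviality. Having ruled out every case, the assumption $A\cdot B\le 0$ is untenable, so $A\cdot B \ge 1$, which is exactly the statement that $H$ is $1$-connected.
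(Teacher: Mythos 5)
Your proof is correct, and while it rests on the same two pillars as the paper's (the nefness inequalities coming from $H=A+B$ and the Hodge Index Theorem), it deploys them along a genuinely different route. The paper plays $A$ and $B$ against each other: nefness of $H$ gives $A^2 \ge -A\cdot B \ge 0$ and $B^2 \ge -A\cdot B \ge 0$, hence $A^2B^2 \ge (A\cdot B)^2 \ge 0$, and this is declared to contradict the Hodge Index Theorem, which forces $(A\cdot B)^2 \ge A^2 B^2$ when one of the self-intersections is positive. Strictly speaking, that contradiction still requires ruling out the equality cases --- for instance $A^2 = B^2 = A\cdot B = 0$ is compatible with both inequalities and is only excluded by $H^2>0$, and numerical proportionality $B \equiv \lambda A$ is only excluded by intersecting with an ample class --- and the paper leaves this implicit (it also points to Theorem \ref{genhodge} at that step, when what is really being used is Theorem \ref{HIT}). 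You instead play $A$ and $B$ each against $H$: you first extract the Cauchy--Schwarz form $(H\cdot E)^2 \ge H^2 E^2$ from Theorem \ref{HIT} via the auxiliary class $(H^2)E - (H\cdot E)H$, combine it with $A^2 \ge H\cdot A$ and $B^2 \ge H\cdot B$ to get $H\cdot A \ge H^2$ and $H\cdot B \ge H^2$ whenever these numbers are positive, and sum to the contradiction $H^2 \ge 2H^2$; the boundary case $H\cdot B=0$ is killed by applying Theorem \ref{HIT} directly and observing that a nonzero effective divisor cannot be numerically trivial. What your route buys is completeness and transparency: every degenerate case is treated explicitly, and it is visible exactly where bigness ($H^2>0$, via Theorem \ref{bignessfornef}) and effectivity enter. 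What the paper's route buys is brevity --- two lines, once the Hodge inequality for the pair $(A,B)$ and its equality analysis are taken for granted.
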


\begin{proof}
Let $H$ be a big and nef divisor and let $A$ and $B$ be non trivial effective divisors such that $H = A + B$. Since $X$ is projective, there exist very ample divisors on $X$, hence $A$ and $B$ are not numerically trivial. $H$ nef gives
\begin{equation*}
A^2 + A \cdot B = H \cdot A \geq 0
\end{equation*}
\begin{equation*}
A \cdot B + B^2 = H \cdot B \geq 0
\end{equation*}
Now if $A \cdot B \leq 0$ then $A^2B^2 \geq (A \cdot B)^2 \geq 0$ contradicts the Hodge Index Theorem, see Theorem \ref{genhodge}. Thus, $A \cdot B \geq 1$ and $H$ is 1-connected. 
\end{proof}

Let $X$ be a smooth projective surface, let $f:X\longrightarrow \mathbb P^2$ a dominant map and $L \subset \mathbb{P}^2$ a line. Let $H = f^*(L)$ be the pullback of $L$ through $f$. We claim that $H$ is big and nef. Indeed,  let $C$ be any irreducible curve on $X$, then by the projection formula 
\begin{equation*}
H\cdot C = f^*(L)\cdot C = L\cdot f_*(C) \geq 0.
\end{equation*}
Therefore, $H$ is nef. The selfintersection of $H$ is 
\begin{equation*}
H^2 =  f^*(L)\cdot f^*(L) = f_*(f^*(L)) \cdot L = deg(f)\,L^2  >0.
\end{equation*}
Thus, by Theorem \ref{bignessfornef} $H$ is big. 
Let us suppose by contradiction that the preimage of a line is not connected. Hence, there exist two effective non trivial divisors $A$ and $B$ such that $A + B = H$ and $A\cdot B = 0$. This contradicts Proposition \ref{1conn}.

\subsubsection{Proof of the Proposition \ref{line:conn}} 
It is enough to show how to reduce to the case $n=2$. Let $H \subseteq \bP^n$ be a general hyperplane. Let $D := f^*(H)$ be the preimage of $H$. The divisor $D \subset X$ might be non irreducible, so we write $D = \sum_ i n_iD_i$, where $D_i$ are all the irreducible components of $D$. Since $H$ is a general ample divisor, then $D$ is nef, big and semiample. Moreover, the restriction of $f^*(H)$ to $D_i$ gives a linear series without base points, because if there was a base point it would be also a base point of $|f^*(H)|$. Thus, the $D_i$ have to move algebraically and given any point $p$ we can find $D_j$ numerically equivalent to $D_i$ that does not have $p$ in its support. This implies that $f$ does not contract $D_i$: otherwise, since $D_i$ cover $X$, the fiber of the general point of $\bP^n$ would have dimension greater than one. We set $ f_*(D_i)=k_i H $ with  $k_i>0$.

We will use the following notation: $A=\sum_{i\in S} n_iD_i$ and $B=\sum_{i\in T}n_iD_i$ such that $S\cap T=\emptyset$ and  $A+B=D$. Then:
\begin{lemma} \label{lemma1}
With the notations above, $A \cdot D^{n - 1} > 0$ and $B \cdot D^{n - 1} > 0$.
\end{lemma}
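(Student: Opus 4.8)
The plan is to reduce everything to the projection formula, exploiting the fact—already established in the paragraph preceding the lemma—that $f$ does not contract any component $D_i$, so that $f_*(D_i) = k_i H$ with $k_i > 0$. Since the decomposition $A + B = D$ is a nontrivial one, both index sets $S$ and $T$ are nonempty, and the claim will follow once I show that each $D_i$ contributes a strictly positive amount to $D_i \cdot D^{n-1}$.

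First I would rewrite the power of $D$ as a pullback. Because $D = f^*(H)$ as divisor classes, we have $D^{n-1} = (f^*H)^{n-1} = f^*(H^{n-1})$. Then, for each component $D_i$, I compute the intersection number by pushing forward along $f$; by the projection formula,
\[
D_i \cdot D^{n-1} = D_i \cdot f^*(H^{n-1}) = f_*(D_i) \cdot H^{n-1} = k_i\, H^n = k_i,
\]
where the final equality uses $H^n = 1$ in $\mathbb{P}^n$. Since $k_i > 0$ for every $i$ by the non-contraction argument, each of these numbers is strictly positive.

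Expanding $A = \sum_{i \in S} n_i D_i$ then gives $A \cdot D^{n-1} = \sum_{i \in S} n_i k_i$, a sum of products of positive integers over a nonempty index set, hence strictly positive; the identical computation yields $B \cdot D^{n-1} = \sum_{i \in T} n_i k_i > 0$.

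I do not anticipate any real obstacle: the entire content is the projection formula combined with the previously proved fact that $f_*(D_i)$ is a genuine positive multiple of $H$. The only point demanding care is to verify that the pushforward is nonzero, i.e.\ that no $D_i$ is contracted by $f$ and that its multiplicity $n_i$ is positive—but this is exactly what was argued just above the lemma, using that the components $D_i$ move in a basepoint-free linear series and cover $X$.
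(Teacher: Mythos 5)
Your proof is correct and follows exactly the paper's argument: reduce to showing $D_i \cdot D^{n-1} > 0$ for each component, then apply the projection formula to get $D_i \cdot f^*(H^{n-1}) = f_*(D_i) \cdot H^{n-1} = k_i H^n = k_i > 0$, using the non-contraction fact $f_*(D_i) = k_i H$ established just before the lemma. The only difference is that you spell out the final summation over nonempty index sets explicitly, which the paper leaves implicit.
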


\begin{proof}
It is enough to prove that $D_i \cdot  D^{n - 1} > 0$ for each $i$.
\begin{equation*}
D_i\cdot D^{n-1}=D_i\cdot f^\ast (H^{n-1})= f_\ast(D_i)\cdot H^{n-1}= k_i H^{n}=k_i>0.
\end{equation*}
\end{proof}

\begin{lemma} \label{lemma2}
With the above notations, the following holds:
\begin{equation*}
H^{n-2}\cdot f_\ast(A\cdot B)=f^{\ast}(H^{n-2}) (A\cdot B)=D^{n-2}\cdot A\cdot B>0
\end{equation*}
\end{lemma}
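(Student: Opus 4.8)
The two displayed equalities are formal and I would dispose of them first. The first is the projection formula applied to the $(n-2)$-cycle $A\cdot B$ against the class $H^{n-2}$, which gives $f^{\ast}(H^{n-2})\cdot(A\cdot B)=H^{n-2}\cdot f_{\ast}(A\cdot B)$; the second is nothing but $D=f^{\ast}H$, so that $D^{n-2}=f^{\ast}(H^{n-2})$. All the content is therefore in the strict inequality $D^{n-2}\cdot A\cdot B>0$, and my plan is to prove it by imitating, one dimension higher, the proof of $1$-connectedness in Proposition \ref{1conn}, with $D^{n-2}$ playing the role of the polarization.

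Concretely, set $p=D^{n-1}\cdot A$, $q=D^{n-1}\cdot B$ and $x=D^{n-2}\cdot A\cdot B$. By Lemma \ref{lemma1} we have $p>0$ and $q>0$. Since $A+B=D$, expanding gives $D^{n-2}\cdot A^2=p-x$ and $D^{n-2}\cdot B^2=q-x$. Suppose for contradiction that $x\le 0$. Then $D^{n-2}\cdot A^2=p-x>0$ and $D^{n-2}\cdot B^2=q-x>0$, so both $A$ and $B$ are ``timelike'' for the quadratic form $\xi\mapsto D^{n-2}\cdot\xi^{2}$. The Hodge-index (reverse Cauchy--Schwarz) inequality for the big and nef class $D$ then reads $(D^{n-2}\cdot A\cdot B)^2\ge (D^{n-2}\cdot A^2)(D^{n-2}\cdot B^2)$, that is $x^{2}\ge (p-x)(q-x)$, which simplifies to $(p+q)\,x\ge pq>0$ and forces $x>0$, contradicting $x\le 0$. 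Hence $x=D^{n-2}\cdot A\cdot B>0$, which is the claim. Note that this argument uses Lemma \ref{lemma1} in exactly the way the $n=2$ case used the nefness inequalities $A^{2}+A\cdot B\ge 0$ and $A\cdot B+B^{2}\ge 0$.

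The point that requires care, and that I expect to be the main obstacle, is justifying the Hodge-index inequality used above in any characteristic. Theorem \ref{genhodge} is stated only for nef classes, and here $A$ and $B$ are merely effective, so it cannot be applied to them directly. What I really need is the Hodge Index Theorem for the big and nef divisor $D$, namely that the form $\xi\mapsto D^{n-2}\cdot\xi^{2}$ on numerical classes has at most one positive eigenvalue; this yields the reverse Cauchy--Schwarz inequality for an arbitrary pair, once one of them is timelike. To obtain it with the tools at hand I would exploit that $D=f^{\ast}H$ is semiample: choosing $m$ with $mD$ base-point-free and letting $S\subset X$ be the intersection of $n-2$ general members of $|mD|$, one has $[S]=m^{n-2}D^{n-2}$, so that intersection numbers weighted by $D^{n-2}$ become honest intersection numbers of the restrictions on the surface $S$, where $D|_{S}$ is nef with $(D|_{S})^{2}=m^{n-2}D^{n}>0$, hence big by Theorem \ref{bignessfornef}. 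On $S$ the inequality is precisely the surface Hodge Index Theorem \ref{HIT}; equivalently, Proposition \ref{1conn} applied to $D|_{S}=A|_{S}+B|_{S}$ gives $A|_{S}\cdot B|_{S}\ge 1$ directly, the non-triviality of $A|_{S}$ and $B|_{S}$ being guaranteed again by Lemma \ref{lemma1} through $A|_{S}\cdot D|_{S}=m^{n-2}\,(A\cdot D^{n-1})>0$.

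The delicate part of this surface reduction is producing $S$ smooth, irreducible and connected in positive characteristic, where Bertini smoothness may fail and where connectedness of $S$ is itself a Bertini-type statement needed to make Theorem \ref{HIT} and Proposition \ref{1conn} applicable. I would address smoothness by passing to a regular alteration $S'\to S$ as in Remark \ref{irr-smooth}: all the relevant intersection numbers are then multiplied by the degree of the alteration, so their signs are unchanged and the conclusion $A|_{S}\cdot B|_{S}>0$ survives. The truly characteristic-free way to avoid the connectedness question altogether is to invoke the generalized Hodge index theorem for the big and nef class $D$ on $X$ abstractly, as the natural companion of Theorem \ref{genhodge}, rather than through a surface section; this is the formulation I would ultimately prefer, since it keeps the whole argument on $X$ and bypasses any appeal to smoothness or connectedness of linear sections.
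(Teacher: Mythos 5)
Your reduction of the lemma to the strict inequality $D^{n-2}\cdot A\cdot B>0$ and the algebra that follows (from $x^{2}\ge (p-x)(q-x)$ to $(p+q)x\ge pq>0$) are correct, but the inequality everything hinges on --- the reverse Cauchy--Schwarz inequality $(D^{n-2}\cdot A\cdot B)^{2}\ge (D^{n-2}\cdot A^{2})(D^{n-2}\cdot B^{2})$ for the big and nef class $D$ and \emph{arbitrary} classes $A,B$ --- is never established, and you say so yourself. It is not Theorem \ref{genhodge} (a different inequality, stated only for nef classes), and neither of your proposed justifications closes the gap. The surface-section route needs the surface $S$ cut out by $n-2$ general members of $|mD|$ to be irreducible, or at least connected, before Theorem \ref{HIT} or Proposition \ref{1conn} can be applied; that is precisely a Bertini-type connectedness statement of the kind the paper is in the business of proving, so the argument is circular. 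Passing to a regular alteration repairs smoothness but not connectedness: the Hodge Index Theorem simply fails on a disconnected surface (take $A$ and $B$ supported on different components), so the sign-preservation remark does not rescue the step. Finally, ``invoking the generalized Hodge index theorem abstractly as the natural companion of Theorem \ref{genhodge}'' is not a proof but an appeal to a theorem the paper never states. So, as written, the argument has a genuine hole at its central step.

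What you missed is a fact established in the paper in the paragraph preceding Lemma \ref{lemma1}: $A$ and $B$ are \emph{nef}, not merely effective. Each component $D_i$ of $D=f^{*}(H)$ carries a base-point-free restricted linear series, hence moves algebraically, so for every irreducible curve $C$ one can replace $D_i$ by a numerically equivalent $D_j$ not containing $C$ and conclude $D_i\cdot C\ge 0$. This nefness is exactly what makes the paper's limited toolkit suffice. The paper's proof assumes $D^{n-2}\cdot A\cdot B=0$, expands
\begin{equation*}
(A+B)^{n-2}\cdot A\cdot B=\sum_{k=0}^{n-2}\binom{n-2}{k}A^{k+1}B^{n-1-k}=0,
\end{equation*}
and, since every summand is an intersection of nef classes and hence nonnegative, concludes that all summands vanish; in particular $A^{n-1}\cdot B=A\cdot B^{n-1}=0$. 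Then Theorem \ref{genhodge}, applied legitimately to the nef classes $A,B,D,\dots,D$ with $h=D$, gives $0=(A\cdot B\cdot D^{n-2})^{n-1}\ge (A^{n-1}\cdot D)(B^{n-1}\cdot D)(D^{n})^{n-3}$, so (say) $A^{n-1}\cdot D=0$; combined with $A^{n-1}\cdot B=0$ this yields $A^{n}=0$, whence $A\cdot D^{n-1}=0$, contradicting Lemma \ref{lemma1}. Had you reinstated nefness, your Lorentzian argument could also be salvaged via a Khovanskii--Teissier-type inequality for nef classes, but that inequality is again not the one quoted in the paper; the termwise-vanishing argument above is the repair that stays within the paper's stated tools.
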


\begin{proof}
We assume, by contradiction, that $D^{n-2} \cdot A \cdot B = 0$. So we have
\begin{equation} \label{eq1}
(A + B)^{n-2} \cdot A \cdot B = \sum_{k=0}^{n-2}{n-2 \choose k}A^{k + 1}B^{n-1-k}=0
\end{equation}
Since $A$ and $B$ are nef, all the coefficients are positive and all the addends of the sum are zero. In particular 

\begin{enumerate}
\item[(i)] $A^{n-1} \cdot B = 0$ corresponding to $k=n-2$;

\item[(ii)] $A \cdot B^{n-1} = 0$ corresponding to $k=0$;
\end{enumerate}
Using now the generalized inequality of Hodge type, see \ref{genhodge}, we have the following
\begin{equation*} 
\begin{aligned}
0 = &(A \cdot B \cdot \underbrace{D \dots D}_{n-2})^{n-1} 
\\
&\geq (A^{n-1}\cdot D)(B^{n-1}\cdot D)\underbrace{(D^{n-1}\cdot D)\dots (D^{n-1}\cdot D)}_{n-2} \geq 0
\end{aligned}
\end{equation*} 
Since $D^n>0$ thanks to Theorem \ref{bignessfornef}, then either $A^{n-1}\cdot D=0$ or $B^{n-1}\cdot D=0$. We can assume to be in the first case, and the proof is the same for the second case. Using (i), we have that $A^{n-1}\cdot D=A^n + A^{n-1}\cdot B =A^n= 0$.  Therefore, $A \cdot D^{n-1}=\sum_{k=0}^{n-1}A^{k+1}B^{n-1-k}=0$, since the term corresponding to $k= n-1$ is $A^n$ and all the other addends are zero because they appear in (\ref{eq1}). This is in contradiction with Lemma \ref{lemma1}.
\end{proof}

Now we finish the proof of Proposition \ref{line:conn}. We proceed by induction on $n\ge 2$. The initial step has been proved in the previous Subsection \ref{num:conn}.
Let us consider a general line $L \subseteq \bP^n$ and let $H$ be a general hyperplane containing $L$. As before, we denote $f^*(H)=:D=\sum_ i n_iD_i$ and the restrictions of $f$ to $D_i$
\begin{equation*}
f_i: D_i \longrightarrow \bP^{n-1}= H
\end{equation*}
are dominant. Since $D_i$ might be singular we have to consider a regular alteration  $a_i:\tilde D_i \longrightarrow D_i$, where $\tilde D_i$ are smooth.
 By induction hypothesis,
we can assume that $a_i^{-1}f_{i}^{-1}(L)$ are connected curves. In particular $f_{i}^{-1}(L) = C_i$ are also connected (otherwise the preimages via $a_i$ of the components of $C_i$ would desconnect $a_i^{-1}f_{i}^{-1}(L)$). Let us set $C := \cup_i C_i$. We have to prove that $C$ is connected. Let us set $Z:= \cup_{i \in S}C_i$ and $Y:= \cup_{i \in T}C_i$ with $S \cap T = \emptyset$ and let $A$ and $B$ be two divisors on $X$, whose support is 
\begin{equation*}
[A]:= \cup_{i \in S}D_i  \\\ \text{    and    }\\\ [B]:= \cup_{i \in T}D_i. 
\end{equation*}
We will prove that $Z\cap Y \neq \emptyset$. We set $\Sigma:= [A] \cap [B] \subseteq X$, then $\Gamma := f(\Sigma)$ is a
 subvariety in $\bP^n$ such $\Gamma \cap H^{n-2}$ is a curve, otherwise, $H^{n-2}\cdot f_\ast(A\cdot B) = 0$ in contradiction with Lemma \ref{lemma2}. Since $H^{n-2}$ is a plane in $\bP^n$ that contains $L$, $\Gamma$ have to intersect $L$. Let $p \in \Gamma \cap L$, we can find a point $q\in X$ such that $f(q) = p$ and $q \in [A] \cap [B] \cap f^{-1}(L) = Z \cap Y$. Thus, $Z \cap Y \neq \emptyset$ and $f^{-1}(L)$ is connected.
\\

This concludes the proof of the Proposition.

\subsection{Connectedness of the preimage of a line for a dominant morphism}
We want now to prove the following proposition:

\begin{proposition}\label{line:conn_dom}
Let $X$ be a projective variety of dimension $n$ defined over an algebraically closed field and let $f: X \longrightarrow \mathbb P^n$ be a dominant morphism. Let $L\subseteq \bP^n$ be a general line. Then $f^{-1}(L)$ is connected.
\end{proposition}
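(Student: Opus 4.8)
The plan is to reduce the general dominant case to the generically finite case already settled in Proposition \ref{line:conn}, using the Stein factorization of $f$. Since $X$ is projective the morphism $f$ is proper, so I would factor $f = g \circ h$, where $h : X \to Y$ is proper and surjective with $h_*\mathcal{O}_X = \mathcal{O}_Y$ (so that its fibres are connected) and $g : Y \to \bP^n$ is finite. Because $X$ is irreducible and $h$ is surjective, $Y$ is an irreducible projective variety, and since $f$ is dominant the finite morphism $g$ is dominant, hence generically finite, so $\dim Y = n$. In this way the positive-dimensional fibres of $f$ (if any) are concentrated in $h$, while $g$ lands in the situation of Proposition \ref{line:conn}.

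The first step is then to apply Proposition \ref{line:conn} to the generically finite morphism $g : Y \to \bP^n$, which yields that for a general line $L \subseteq \bP^n$ the preimage $W := g^{-1}(L)$ is connected. I note that no smoothness of $Y$ is required here, since Proposition \ref{line:conn} is stated for an arbitrary projective variety (its proof passes internally to regular alterations of the components of the hyperplane pullback).

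The second, and main, ingredient is the topological fact that a proper surjective morphism with connected fibres carries connected sets back to connected sets. Applying this to $h$ and to $W$, and using $f^{-1}(L) = h^{-1}(W)$, I would argue as follows: if $h^{-1}(W) = A \sqcup B$ with $A, B$ closed, disjoint and nonempty, then each fibre $h^{-1}(y)$ with $y \in W$ is connected, hence lies entirely in $A$ or in $B$, so $W$ splits as $(W \setminus h(B)) \sqcup (W \setminus h(A))$. Since $h$ is proper it is closed, so $h(A)$ and $h(B)$ are closed; the equalities $W \cap h(A) = W \setminus h(B)$ and $W \cap h(B) = W \setminus h(A)$ then exhibit each piece as simultaneously open and closed in $W$. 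As $W$ is connected, one of the two pieces is empty, forcing $A$ or $B$ to be empty, a contradiction. Hence $f^{-1}(L) = h^{-1}(g^{-1}(L))$ is connected for a general line $L$.

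The hard part will be almost entirely bookkeeping rather than new geometry: checking that Stein factorization is available and behaves as expected in arbitrary characteristic (it is, being the relative $\operatorname{Spec}$ of the coherent sheaf $f_*\mathcal{O}_X$, with connectedness of fibres encoded by $h_*\mathcal{O}_X = \mathcal{O}_Y$), and verifying that the \emph{same} general $L$ produced by Proposition \ref{line:conn} for $g$ also serves to conclude for $f$. No further numerical input is needed beyond Proposition \ref{line:conn}; the whole step rests on the topology of proper maps with connected fibres.
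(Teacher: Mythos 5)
Your proposal follows the same route as the paper: Stein factorization $f = g\circ h$ with $h$ having connected fibres and $g\colon Y\to \bP^n$ finite, the generically finite case (Proposition \ref{line:conn}) applied to the finite part, and descent of connectedness through the connected fibres of $h$. Your topological descent argument (a clopen decomposition of $h^{-1}(W)$ induces, via properness and fibre connectedness, a clopen decomposition of $W$) is correct, and is in fact spelled out in more detail than the paper's one-line assertion ``since the fibres of $f'$ are connected, we get that $f^{-1}(L)$ is connected.''

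The one point where you diverge, and where your justification is genuinely shaky, is the application of Proposition \ref{line:conn} directly to the Stein factor $Y$. The variety $Y$ has no reason to be nonsingular, and although Proposition \ref{line:conn} is \emph{stated} for an arbitrary projective variety, its proof in the paper is carried out under the standing assumption, made at the start of Section \ref{mainsection} via Remark \ref{irr-smooth}, that the source is nonsingular: the base case $n=2$ rests on Proposition \ref{1conn} ($1$-connectedness of big and nef divisors), which invokes the Hodge Index Theorem on a \emph{smooth} projective surface, and Lemmas \ref{lemma1} and \ref{lemma2} manipulate the components $D_i$ of $f^*(H)$ as divisors on that smooth source. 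Your parenthetical reason --- that the proof of Proposition \ref{line:conn} ``passes internally to regular alterations of the components of the hyperplane pullback'' --- misidentifies where smoothness enters: the alterations $a_i\colon \tilde D_i \to D_i$ serve the induction step, not the smoothness of the ambient variety. The repair is cheap and is exactly what the paper does: take a regular alteration $\lambda\colon \tilde Y \to Y$ (it exists by \cite[Theorem 3.1]{DeJong}), apply Proposition \ref{line:conn} to the generically finite map $\tilde g = g\circ\lambda\colon \tilde Y \to \bP^n$ from the smooth $\tilde Y$, and conclude that $g^{-1}(L)$ is connected because $\lambda$ is surjective and $\lambda^{-1}\bigl(g^{-1}(L)\bigr) = \tilde g^{-1}(L)$ is connected. (Equivalently, one may first record, by the alteration trick of Remark \ref{irr-smooth}, that Proposition \ref{line:conn} for singular sources follows from the smooth case; but some such step is needed, and your write-up omits it.) With this insertion your argument coincides with the paper's proof.
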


\begin{proof}
Using the Stein Factorization of $f$, see \cite[III, Corollary 11.5]{hartshorne}, we obtain the following diagram:

\[
\xymatrixcolsep{5pc} \xymatrixrowsep{4pc}
\xymatrix{
X \ar[rd]^{f'} \ar[rr]^{f} &&
\mathbb{P}^n\\
& Y\ar[ru]^{g} }
\]
where $f'$ has connected fibers and $g$ is a finite map into $\mathbb{P}^n$. Let $\lambda: \tilde{Y} \longrightarrow Y$ be a regular alteration of $Y$.

\[
\xymatrixcolsep{5pc} \xymatrixrowsep{4pc}
\xymatrix{
X \ar[rd]^{f'} \ar[rr]^{f}  &&
\mathbb{P}^n\\
& Y \ar[ru]^{g} \\
& \tilde{Y} \ar[u]^{\lambda} \ar[ruu]^{\tilde{g}} }
\]
Since the map $\tilde{g}$ is generically finite and $\tilde{Y}$ is nonsingular, then using Proposition \ref{line:conn} we get that $\tilde{g}^{-1}(L)$ is connected, for any general line $L \subseteq \mathbb{P}^n$. Thus, $g^{-1}(L)$ is connected, otherwise $\tilde{g}^{-1}(L) = \lambda^{-1}g^{-1}(L)$ would not be connected. Since the fibres of $f'$ are connected, we get that $f^{-1}(L)$ is connected. 
This concludes the proof.

\end{proof}

\subsection{Connectedness of a higher dimension linear variety} \label{higherdim}
Finally, we have to consider the case of $L$ of arbitrary dimension.
Let $H\in(\mathbb{P}^n)^{\vee}$ be an hyperplane and let $p \notin H$ a general point. We consider the projection $\Pi: \mathbb{P}^n \setminus \{p\} \longrightarrow H$. We can identify $H$ with the projective space of the lines through $p$. Then the blow-up of $\mathbb{P}^n$ in the point $p$ can be seen as
\begin{equation*}
 B_p=\{(x,l)\in \mathbb P^n \times H \, \vert \, x\in l\}.
\end{equation*}
The exceptional divisor is $E_p=\{p\}\times H$ and the first projection gives
the birational map $\epsilon_p: B_p \longrightarrow \mathbb{P}^n$. Since we resolved the indeterminacy locus, the map $\epsilon_p \circ \Pi$ is now a morphism. 

We can now consider the fibre product $\tilde{X} := X \times_{\mathbb{P}^n} B_p$. We obtain a well-defined dominant morphism $f': \tilde X \longrightarrow \mathbb{P}^{n - 1} $. The fibre $f^{-1}(p)$ might be singular because of the failure of the Generic Smoothness Theorem in positive characteristic. Therefore, singularities might arise in $\tilde{X}$. As before,
we consider a regular alteration of $\tilde{X}$, $f^{''}: X^{''} \longrightarrow \tilde{X}$, where $X^{''}$ is a smooth irreducible variety.

\[\xymatrixcolsep{5pc} \xymatrixrowsep{4pc}
\xymatrix{
X^{''}\ar[d]^{f^{''}} \\
\tilde{X} \ar[r]^{\tilde{f}} \ar[d]^{\tilde{\epsilon}} \ar[rdd]^{f'} & B_p \ar[d]^{\epsilon_p} \\
X \ar[r]_f & \mathbb{P}^n \ar@{-->}[d]^{\Pi} \\
& \mathbb{P}^{n - 1} }
\]
We call $h:= f'' \circ f'$. Assuming the connectedness statement for $h$, we get that $h^{-1}(L)$ is connected for any linear variety $L\subseteq \bP^{n-1}$ of dimension $k \geq 1$. Then we get that $f''( h^{-1}(L))$ is connected too, otherwise the preimages via $f''$ of the components would disconnect $h^{-1}(L)$. Since $f^{-1}(L\vee p)=
\tilde \epsilon(f''( h^{-1}(L))$, we get that the connectedness statement is true for any linear variety of $\bP^{n}$ containing $p$ of dimension $k' \geq 2$. To obtain the thesis for every linear variety $L$ contained in $\bP^{n}$ of dimension $k' \geq 2$, it is sufficient to consider a point $p\in L$ and repeat the previous construction. Observe that with this procedure we do not reach the one dimensional case that we have proved in Proposition \ref{line:conn_dom}. This concludes the proof of Theorem \ref{main}.

\section{Classic Bertini Theorem} \label{classicalBertini}

If  in Theorem \ref{main} we consider the inclusion map, we get the classic version of Bertini Theorem. The theorem states that a general hyperplane section of a non singular variety in a projective space is again nonsingular and connected if the dimension of the variety is greater than two. It holds over an arbitrary algebraically closed field of any characteristic and can be proved by computing the dimension of non transverse hyperplanes to the variety (see \cite[Theorem II.8.18]{hartshorne}).

\begin{theorem}[Bertini theorem]
Let  X be a nonsingular closed subvariety of $\mathbb{P}^n_k$, where {\itshape k} is an algebraically closed field. Then there exists an hyperplane $H \subseteq \mathbb{P}^n_k$, not containing $X$, and such that the scheme $H \cap X$ is regular at every point. If $\dim X  \ge 2$, $ H \cap X$ is also connected. Furthermore, the set of hyperplanes with this property forms an open dense subset of the complete linear system $|H|$, considered as projective space.
\end{theorem}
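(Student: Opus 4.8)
The plan is to treat the two assertions of the statement separately, since they come from completely different sources. The connectedness of $H \cap X$ for $\dim X \ge 2$ will follow directly from Theorem \ref{main}, whereas the regularity of the general section, together with the openness and density of the good locus, is the classical part and receives no help from the numerical machinery of this paper; it has to be obtained by a transversality dimension count.

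For the connectedness I would apply Theorem \ref{main} to the inclusion $f \colon X \hookrightarrow \mathbb{P}^n$. Since $f$ is a closed immersion we have $f(X) = X$ and hence $\dim f(X) = \dim X$. A hyperplane $H$ is a linear subvariety $L \in \mathbb{G}(n-1, n)$ of dimension $k = n-1$, and the hypothesis $k \ge n+1 - \dim f(X)$ of Theorem \ref{main} reads $n-1 \ge n+1 - \dim X$, that is, $\dim X \ge 2$. Thus, as soon as $\dim X \ge 2$, Theorem \ref{main} guarantees that $f^{-1}(H) = X \cap H$ is connected, and this holds for \emph{every} hyperplane $H$, not merely for a general one.

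For the regularity and the density statement I would run the standard incidence-variety argument, which is characteristic-free because it rests only on a dimension count and not on generic smoothness. Consider
\[
B = \{(x,H) \in X \times (\mathbb{P}^n)^{\vee} : x \in X \cap H \text{ and } H \supseteq T_x X\},
\]
the locus of pairs where $H$ fails to meet $X$ transversally at $x$. Projecting to $X$, the fibre over a point $x$ consists of the hyperplanes containing the embedded tangent space $T_x X$, which form a linear subspace of $(\mathbb{P}^n)^{\vee}$ of dimension $n - 1 - \dim X$. Hence $\dim B = \dim X + (n - 1 - \dim X) = n - 1 < n = \dim (\mathbb{P}^n)^{\vee}$, so the image of the second projection is a proper closed subset. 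Discarding this image, together with the proper closed linear subspace of hyperplanes that contain $X$, leaves a nonempty Zariski open set $U \subseteq |H|$ for which $X \cap H$ is regular at each of its points and does not contain $X$; this is exactly \cite[Theorem II.8.18]{hartshorne}.

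Intersecting $U$ with the (here, full) set of hyperplanes giving connected sections then produces a general $H$ for which $X \cap H$ is simultaneously regular and, when $\dim X \ge 2$, connected, which is the claimed open dense family. The only genuine obstacle is conceptual rather than computational: Theorem \ref{main} contributes nothing toward regularity, being a purely topological consequence of the numerical arguments, so the smoothness must be supplied entirely by the classical transversality count, and the remaining work is just the dimension bookkeeping that matches the bound $k \ge n+1-\dim f(X)$ to the condition $\dim X \ge 2$.
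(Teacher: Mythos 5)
Your proposal is correct and takes essentially the same route as the paper: the paper likewise obtains connectedness by applying Theorem \ref{main} to the inclusion map $X \hookrightarrow \mathbb{P}^n$ (where the hypothesis $k = n-1 \ge n+1-\dim X$ is exactly $\dim X \ge 2$), and handles regularity and the open-dense statement by the classical dimension count of non-transverse hyperplanes, citing \cite[Theorem II.8.18]{hartshorne}. Your incidence-variety argument is precisely that dimension count, so the two proofs coincide in substance.
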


\section{Connectedness theorem of Fulton and Hansen} \label{fultonhansen}

We give a very short proof of the connectedness Theorem of Fulton and Hansen where we re-elaborate an idea of Deligne.

\begin{theorem}\label{Fulton-Hansen}
Let $X$ be an irreducible projective variety, and
\begin{equation*}
F: X \longrightarrow \bP^{n} \times \bP^{n}
\end{equation*}
a morphism. Assume that $\dim F(X)>n$. Let $\Delta \subseteq \bP^{n} \times \bP^{n}$ be the diagonal, then the inverse image $F^{-1}(\Delta) \subseteq X$ of the diagonal  is connected.
\end{theorem}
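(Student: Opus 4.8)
The plan is to deduce the Fulton--Hansen theorem from Theorem~\ref{main} by reducing the statement about the diagonal in $\bP^n \times \bP^n$ to a statement about the preimage of a linear subvariety under a single morphism to a projective space. The key geometric input, following Deligne, is that the diagonal $\Delta \subseteq \bP^n \times \bP^n$ is not a linear subvariety of $\bP^n \times \bP^n$ in any obvious embedding, but it \emph{can} be realized as a linear section after passing to a suitable presentation. Concretely, I would exploit the classical fact that $\Delta$ is cut out by the $n$ equations $x_i y_j - x_j y_i = 0$, and reinterpret the pair of maps $X \to \bP^n \times \bP^n$ in terms of a single morphism whose target carries $\Delta$ as the preimage of a linear space.

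First I would replace the product $\bP^n \times \bP^n$ by the affine cone picture: write $F = (f_1, f_2)$ with $f_1, f_2 : X \to \bP^n$, and consider the morphism to the Grassmannian or, more directly, build an auxiliary variety that fibers the problem so that the diagonal becomes a linear condition. The cleanest route is Deligne's trick: consider the morphism $g : X \times \bP^n \dashrightarrow \bP^n \times \bP^n$ (or an incidence correspondence) designed so that the intersection with $\Delta$ transforms into the preimage of a linear space $L$ of the correct codimension, while controlling that the dimension of the image grows by exactly the dimension of the auxiliary $\bP^n$ factor. The numerical bookkeeping has to produce the inequality $\dim L + \dim(\text{image}) > N$ required by Theorem~\ref{main}, where $N$ is the dimension of the ambient projective space into which one has embedded (e.g.\ via Segre) the relevant product. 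Under the hypothesis $\dim F(X) > n$, this inequality should come out correctly, so that Theorem~\ref{main} applies and yields connectedness of the transformed preimage.

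Second, once connectedness is established for the auxiliary preimage, I would descend it back to $F^{-1}(\Delta)$. This requires showing that the map relating the auxiliary variety to $X$ is proper with connected (or at least surjective) fibers over $F^{-1}(\Delta)$, so that connectedness upstairs forces connectedness downstairs: if $F^{-1}(\Delta)$ were disconnected, its disconnection would pull back to a disconnection of the auxiliary preimage, contradicting Theorem~\ref{main}. This is exactly the style of argument used repeatedly in Section~\ref{higherdim} and in the proof of Proposition~\ref{line:conn_dom}, where connectedness is transferred along alterations and surjective morphisms, so I would reuse that mechanism here.

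The hard part will be setting up the auxiliary morphism so that the diagonal genuinely becomes the preimage of a \emph{linear} subvariety in a projective space to which Theorem~\ref{main} directly applies, and simultaneously verifying that the dimension of the image is large enough for the hypothesis $k \ge N + 1 - \dim(\text{image})$ to hold. The subtlety is that $\Delta$ has codimension $n$ in $\bP^n \times \bP^n$, so one must arrange the embedding and the linear cut so that this codimension matches a genuine linear condition; getting the Segre embedding, the incidence correspondence, and the numerical inequality to line up is the delicate bookkeeping, and a naive encoding of $\Delta$ as a linear space fails. Once the correct correspondence is in place, however, the remainder is the routine connectedness-descent argument already developed in the preceding sections.
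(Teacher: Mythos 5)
Your proposal correctly identifies the overall strategy --- Deligne's idea of converting the diagonal into a linear section so that Theorem~\ref{main} applies, followed by a routine descent of connectedness --- but it stops exactly where the proof actually lives. The entire mathematical content of this theorem is the construction you defer as ``the hard part,'' and none of the candidate constructions you float (the Segre embedding, a map $X \times \bP^n \dashrightarrow \bP^n \times \bP^n$, a morphism to a Grassmannian) is developed or would work as stated. You yourself note that the Segre route fails, and it does: under Segre the diagonal is the preimage of a linear space of codimension $\binom{n+1}{2}$ (not $n$ equations, as you write --- the equations $z_iw_j - z_jw_i = 0$ number $\binom{n+1}{2}$), and the resulting inequality required by Theorem~\ref{main} would force $\dim F(X) \geq \tfrac{1}{2}(n^2+n+2)$, far stronger than the hypothesis $\dim F(X) > n$. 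So the proposal, as written, contains a genuine gap: it describes the shape of the proof without producing the object that makes it go.

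The missing construction is the following. Write $F = (f,g)$, set $L := f^*\mathcal{O}_{\bP^n}(1)$, $M := g^*\mathcal{O}_{\bP^n}(1)$, and form the rank-two bundle $E := L \oplus M$ with its $\bP^1$-bundle $\pi : \mathbb{P}(E) \to X$. The $2n+2$ sections $f^*(z_i)$, $g^*(w_j)$ generate $\pi^*(E)$, hence generate $\mathcal{O}_E(1)$ via the tautological quotient, and so define a morphism $Q : \mathbb{P}(E) \to \bP^{2n+1}$. Fiberwise, $Q$ sends the $\bP^1$ over $x \in X$ to the line in $\bP^{2n+1}$ joining $(f(x):0)$ and $(0:g(x))$; this line meets the $n$-plane $\Lambda = \{z_i = w_i\}$ exactly when $f(x) = g(x)$, so $F^{-1}(\Delta) = \pi\bigl(Q^{-1}(\Lambda)\bigr)$. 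The point of passing to the bundle is the dimension count: $\dim Q(\mathbb{P}(E)) = \dim F(X) + 1$, so the hypothesis of Theorem~\ref{main} with $k = n$ and ambient dimension $2n+1$ reads $n \geq (2n+1) + 1 - (\dim F(X)+1)$, i.e.\ $\dim F(X) > n$ --- exactly the Fulton--Hansen hypothesis. (The diagonal has codimension $n$ in $\bP^n\times\bP^n$, while $\Lambda$ has codimension $n+1$ in $\bP^{2n+1}$; the extra fiber dimension of $\mathbb{P}(E)$ absorbs the discrepancy.) Theorem~\ref{main} then gives that $Q^{-1}(\Lambda)$ is connected, and $F^{-1}(\Delta)$ is connected as its image under $\pi$ --- note that this last step needs only continuity, not the fiber analysis you sketch in your descent paragraph.
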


\begin{proof}
Let us consider $\bP^{n} \times \bP^{n}$ with homogenous coordinates $[z_i]$ and $[w_j]$, let $f: X \longrightarrow \mathbb{P}^n$ and $g: X \longrightarrow \mathbb{P}^n$ be morphisms such that $F = (f, g)$. Then we can consider the following line bundles on $X$: 
\begin{equation*}
L:= f^* \mathcal{O}_{\bP^n}(1),\\\ \\\ M := g^* \mathcal{O}_{\bP^n}(1)
\end{equation*}
with sections $s_i = f^*(z_i)$ and $t_j = g^*(w_j)$. Let
\begin{equation*}
E:= L \oplus M.
\end{equation*}
be the rank two bundle and let $\pi: \mathbb{P}(E) \longrightarrow X$ be the associated projective space bundle. We have the tautological exact sequence:
\[
 \xymatrix@C=1.pc@R=1.8pc{
0 \ar[r]     &\mathcal N \ar[r] &\pi^*(E) \ar[r]^{\phi } &  \mathcal{O}_{E}(1) \ar[r] & 0 }
\]
where $\mathcal{N}$ is defined as the kernel of the map $\phi$. The bundle $\pi^*(E)$ is generated by the $2n + 2$ sections $\pi^*(s_i)$ and $\pi^*(w_j)$, and their image via the map $\phi$ generate $\mathcal{O}_{E}(1)$. Therefore, we have the map $Q: \mathbb{P}(E) \longrightarrow \bP^{2n + 1}$, associated to $\pi^*(E)$. In coordinates: 
\begin{equation*}
\begin{aligned}
P \longmapsto &(\pi^*s_0(P), \dots, \pi^*s_n(P), \pi^*t_0(P), \dots, \pi^*t_n(P)).
\end{aligned}
\end{equation*}
We remark that 
\begin{equation*}
\dim Q(\bP(E)) = \dim F(X) + 1.
\end{equation*}
If we consider the embedding of  $\bP^{n} \times \bP^{n}$ into $\bP^{2n + 1}$, the image of the diagonal $\Delta$ is given by the $n$-dimensional linear subspace $L \subseteq \bP^{2n + 1}$ defined be the equations $\{z_i = w_i\}$. Then 
\begin{equation*}
F^{-1}(\Delta) = \pi(Q^{-1}(L))
\end{equation*} and the thesis follows from Theorem \ref{main}. 
\end{proof}

From Theorem \ref{main} we can deduce a connectedness theorem for flag manifolds and grassmannians.

\begin{theorem}
Let $\mathbb{F}$ be any flagmanifold in $\bP^{n}$, and $\Delta_F$ be the image of the diagonal embedding of $\mathbb{F}$ in $\mathbb{F} \times \mathbb{F}$. Let $X$ be an irreducible variety and $f: X \longrightarrow \mathbb{F} \times \mathbb{F}$ a morphism. Then $f^{-1}(\Delta_F)$ is connected if codim $(f(X),\mathbb{F} \times \mathbb{F}) < n$.
\end{theorem}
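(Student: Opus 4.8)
The plan is to reduce the statement to the Fulton--Hansen theorem for $\bP^n$ itself (Theorem \ref{Fulton-Hansen}), exploiting that $\mathbb{F}$ is homogeneous under $G=\mathrm{PGL}_{n+1}$ and that $\Delta_F$ is the closed $G$-orbit in $\mathbb{F}\times\mathbb{F}$. As in Remark \ref{irr-smooth} I would first pass to a regular alteration to assume $X$ smooth; writing $f=(f_1,f_2)$ with $f_i\colon X\to\mathbb{F}$, the locus $f^{-1}(\Delta_F)$ is exactly $\{x:\ f_1(x)=f_2(x)\}$, i.e. the locus where the two flags of linear subspaces of $\bP^n$ attached to $x$ coincide, and the binding case of the hypothesis is the extremal one $\dim f(X)=2\dim\mathbb{F}-n+1$.

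The naive approach would be to use the embedding $\mathbb{F}\hookrightarrow\bP^N$ (Plücker, or a product of Plücker embeddings for a genuine flag), under which $\Delta_F=(\mathbb{F}\times\mathbb{F})\cap\Delta_{\bP^N}$; composing $f$ with $\mathbb{F}\times\mathbb{F}\hookrightarrow\bP^N\times\bP^N$ and applying Theorem \ref{Fulton-Hansen} gives connectedness whenever $\dim f(X)>N$. This already settles the statement when the codimension is very small, but it is not sharp: for $\mathbb{F}=\bP^n$ one has $N=n$ and recovers exactly Fulton--Hansen, whereas for a general Grassmannian $N$ strictly exceeds $2\dim\mathbb{F}-n$, so the sharp threshold $\mathrm{codim}<n$ is lost. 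Recovering it is the whole point, and it must use the geometry of the flags inside $\bP^n$, not merely the abstract embedding.

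The key construction I would build is a tautological incidence correspondence realizing $\Delta_F$ as a pullback of the $\bP^n$-diagonal. Pulling back through $f_1$ and $f_2$ the tautological tower of subspaces on $\mathbb{F}$ produces a tower of projective bundles $\pi\colon\widetilde X\to X$, whose points record $x$ together with points $p_\bullet,q_\bullet\in\bP^n$ lying on the respective members of the flags $f_1(x)$ and $f_2(x)$; evaluation of these points yields a morphism $\Phi\colon\widetilde X\to\bP^n\times\bP^n$. The design goal is that $\Phi^{-1}(\Delta_{\bP^n})$ be precisely the locus where the tautological data on the two sides are forced to agree, so that $\pi\big(\Phi^{-1}(\Delta_{\bP^n})\big)=f^{-1}(\Delta_F)$. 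Since $\pi$ is proper with connected fibres, connectedness of $\Phi^{-1}(\Delta_{\bP^n})$, granted by Theorem \ref{Fulton-Hansen} once $\dim\Phi(\widetilde X)>n$, would descend to $f^{-1}(\Delta_F)$, by the same mechanism used in Proposition \ref{line:conn_dom} to pass connectedness between a variety and its image along a proper map.

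The hard part will be twofold, and both difficulties concern the correspondence rather than the connectedness input. First, a single incidence (a point common to the smallest members) only detects that the two flags \emph{meet}, which is far weaker than their being \emph{equal}; one must encode enough of the tautological data that coincidence of all of it is equivalent to $f_1(x)=f_2(x)$, and this is where the homogeneity of $\mathbb{F}=G/P$ and the rigidity of the closed orbit $\Delta_F$ are essential. Second, one must compute $\dim\Phi(\widetilde X)$ and show that the threshold $\dim\Phi(\widetilde X)>n$ translates back, after subtracting the fibre dimension of $\pi$, into precisely $\dim f(X)>2\dim\mathbb{F}-n$; arranging the bookkeeping so that the fibre contributions absorb the excess is the delicate step, and is exactly the reason the final bound is governed by $n$ and not by the dimension $N$ of the ambient Plücker space.
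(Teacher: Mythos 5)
Your high-level route --- realize $f^{-1}(\Delta_F)$ through a correspondence over $X$, get connectedness upstairs from Theorem \ref{Fulton-Hansen}, and descend it along a proper map with connected fibres --- is the same as the paper's, whose entire proof consists of the remark that one should follow the construction of \cite{hansen} and then apply Theorem \ref{Fulton-Hansen}; your observation that the Pl\"ucker embedding only yields a non-sharp bound is also correct. The genuine gap is that the object carrying the whole proof, the correspondence itself, is never constructed, and the architecture you sketch provably cannot meet your own stated design goal. If $\pi\colon\widetilde X\to X$ parametrizes points $p,q\in\bP^n$ lying on members of the two flags and $\Phi\colon\widetilde X\to\bP^n\times\bP^n$ evaluates them, then $\pi\bigl(\Phi^{-1}(\Delta_{\bP^n})\bigr)$ is by definition the set of $x$ for which \emph{some} compatible choice of points satisfies $p=q$, i.e.\ the locus where the relevant members of the two flags \emph{meet}. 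Equality of flags is a universal condition on such pointwise data, while projecting along $\pi$ only ever produces existential conditions, so no enrichment of the fibres by further incident points can force $\pi\bigl(\Phi^{-1}(\Delta_{\bP^n})\bigr)=f^{-1}(\Delta_F)$: one always gets a locus that in general strictly contains it. The obstruction is already visible in linear algebra: Deligne's trick (used in the paper's proof of Theorem \ref{Fulton-Hansen}) rests on the fact that lines $\ell_1,\ell_2\subset V$ satisfy $\ell_1=\ell_2$ if and only if $(\ell_1\oplus\ell_2)\cap\Delta V\neq 0$, where $\Delta V=\{(v,v)\}\subset V\oplus V$; but for $(d+1)$-dimensional subspaces $S_1,S_2\subset V$ with $d\geq 1$ the condition $(S_1\oplus S_2)\cap\Delta V\neq 0$ only says $S_1\cap S_2\neq 0$, and equality is the Schubert-type condition $\dim\bigl((S_1\oplus S_2)\cap\Delta V\bigr)\geq d+1$, which is not the preimage of a diagonal or of a linear space under any evaluation map of the kind you propose.

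So what is missing is precisely the mathematical content of the theorem: Hansen's construction, which encodes equality of flags through the $\bP^n$-diagonal in a way that preserves the sharp bound $n$ rather than the Pl\"ucker dimension. You correctly identify this difficulty (and the accompanying dimension bookkeeping) as ``the hard part,'' but you resolve neither; the steps you do carry out --- applying Theorem \ref{Fulton-Hansen} upstairs, descending connectedness along a proper surjection as in Proposition \ref{line:conn_dom}, passing to an alteration as in Remark \ref{irr-smooth} --- are the routine ones. As written, the proposal is a correct identification of the target and of the obstacles, not a proof; completing it requires either reproducing the construction of \cite{hansen} or inventing a genuinely new way to cut out flag equality, and nothing in the proposal indicates how to do either.
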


Is is enough to follow the construction  in \cite{hansen} and then apply Theorem \ref{Fulton-Hansen}.

\begin{bibdiv}
 \begin{biblist*}

\bib{bertini}{article}{
   author={Bertini, Eugenio},
   title={Sui sistemi lineari},
   journal={Istit. Lombardo Accad. Sci. Lett. Rend. A Istituto},
   volume={15},
   date={1882},
   pages={24--29},
}

\bib{Deligne}{article}{
   author={Deligne, Pierre},
   title={Le groupe fondamental du compl{\'e}ment d'une courbe
   plane n'ayant que des points doubles ordinaires est ab{\'e}lien},
   journal={S{\'e}minaire Bourbaki,},
   number={543},
   date={Nov.1979},

}

\bib{DeJong}{article}{
   author={De Jong, Aise Johan},
   title={Smoothness, semi-stability and alterations},
   journal={Inst. Hautes Études Sci. Publ. Math.},
   number={83},
   date={1996},
   pages={51--93},
}

\bib{fulthansen}{article}{
   author={W. Fulton; J. Hansen},
   title={A Connectedness Theorem for Projective Varieties, with Applications to Intersections and Singularities of Mappings},
   journal={Annals of Mathematics},
   volume={110},
   number={1},
   date={Jul., 1979},
   pages={159--166},
}

\bib{fultlaz}{book}{
   author={ W. Fulton; R. Lazarsfeld},
   title={Connectivity and its applications in algebraic geometry},
   note={Algebraic Geometry (Chicago, 1980), 26-92, LNM 862},
   publisher={Springer},
   number={1},
   place={Berlin - New York},
   date={1981},
}

\bib{hansen}{article}{
   author={J. Hansen},
   title={A Connectedness Theorem for Flagmanifolds and Grassmannians},
   journal={American Journal of Mathematics},
   volume={105},
   number={3},
   date={Jun., 1983},
   pages={633--639},
}

\bib{hartshorne}{book}{
   author={Hartshorne, Robin},
   title={Algebraic geometry},
   note={Graduate Texts in Mathematics, No. 52},
   publisher={Springer-Verlag},
   place={New York},
   date={1977},
}

\bib{jouanolou}{book}{
   author={Jouanolou, J. P.},
   title={Theoremes de Bertini et applications},
   publisher={Progress in Mathematics},
   place={Birkhuser Boston, Boston, MA},
   date={1983},
}

\bib{Kleiman}{article}{
   author={ Kleiman, S.},
   title={Bertini and his two fundamental Theorems},
   journal={Rend. Circ. Mat. Palermo},
  
}

\bib{kollar}{book}{
   author={Koll{\'a}r, J{\'a}nos},
   title={Complex Algebraic geometry},
   note={IAS, Park City Mathematics Series, Vol. 3},
   publisher={American Mathematical Society Institute for Advanced Study},
   place={Park City},
   date={1997},
}

\bib{lazarsfeld}{book}{
   author={Lazarsfeld, Robert},
   title={Positivity in Algebraic geometry},
   note={Graduate Texts in Mathematics, No. 52},
   publisher={Springer},
   place={New York},
   date={2000},
}

\bib{seidenberg}{article}{
   author={Seidenberg, A.},
   title={The Hyperplane Sections of Normal Varieties},
   publisher={Transactions of the American Mathematical Society},
   volume={69}
   number={2}
   date={1950},
   pages={327--386},
}

\bib{sommese}{article}{
   author={A. Sommese; A. Van de Ven},
   title={Homotopy groups of pullbacks of varieties},
   publisher={Nagoya Math. J.},
   volume={102}
   date={1986},
   pages={79--90},
}

\bib{Weil}{book}{
   author={Weil, A.},
   title={Foundations of Algebraic Geometry },
   publisher={American Mathematical Society},
   place={Providence (Rhode Island)},
   date={1962},
}

\bib{zak}{article}{
   author={Zak, F. L.},
   title={Tangents and Secants of Algebraic Varieties},
   publisher={Transl. Math. Monogr. Amer. Math. Soc.},
   volume={127},
   date={1993},
}

\end{biblist*}
\end{bibdiv}

\end{document}